\theoremstyle{plain}
\newtheorem{theorem}{Theorem}
\numberwithin{equation}{section}
\numberwithin{theorem}{section}
\numberwithin{figure}{section}
\newcounter{mycount}
\newenvironment{letlist}{\begin{list}{\rm(\alph{mycount})}%
   {\usecounter{mycount}\labelwidth=1cm\itemsep 0pt}}{\end{list}}
\newcommand\s{\sigma}
\newcommand\oo{\infty}
\newcommand\LL{{\mathbb L}}
\newcommand\HH{{\mathbb H}}
\newcommand\NN{{\mathbb N}}
\newcommand\sL{{\mathcal L}}
\newcommand\sA{{\mathcal A}}
\newcommand\sAc{\sA_{\text{\rm c}}}
\newcommand\sE{{\mathcal E}}
\newcommand\ZZ{{\mathbb Z}}
\newcommand\RR{{\mathbb R}}
\newcommand\Ze{Z_{\text{\rm e}}}
\newcommand\sEe{\sE_{\text{\rm e}}}
\newcommand\wt{\widetilde}
\newcommand\sm{\setminus}
\renewcommand\a{\alpha}
\newcommand\Si{\Sigma}
\newcommand\g{\gamma}
\newcommand\resp{respectively}
\newcommand\fish{F}
\newcommand\bb{\mathrm{b}}
\newcommand\ww{\mathrm{w}}
\begin{document}
\title[SAWs and the Fisher transformation]
{Self-avoiding walks and the\\ Fisher transformation}
\author[Grimmett]{Geoffrey R.\ Grimmett}
\address{Statistical Laboratory, Centre for
Mathematical Sciences, Cambridge University, Wilberforce Road,
Cambridge CB3 0WB, UK} 
\email{\{g.r.grimmett, z.li\}@statslab.cam.ac.uk}
\urladdr{http://www.statslab.cam.ac.uk/$\sim$grg/}
\urladdr{http://www.statslab.cam.ac.uk/$\sim$zl296/}

\author[Li]{Zhongyang Li}

\begin{abstract}
The Fisher transformation acts on cubic graphs by replacing each vertex by a triangle.
We explore the action of the Fisher transformation
on the set of self-avoiding walks of a cubic graph.
Iteration of the transformation yields a 
sequence of graphs with common critical exponents, and with
connective constants converging geometrically to the golden mean.

We consider the application of the Fisher transformation
to one of the two classes of vertices of a bipartite cubic graph. The connective constant
of the ensuing graph may be expressed in terms of that of the initial graph. When
applied to the hexagonal lattice, this identifies a further lattice whose
connective constant may be computed rigorously.
\end{abstract}

\date{24 August 2012} 

\keywords{Self-avoiding walk, connective constant, cubic graph, 
Fisher transformation,  
quasi-transitive graph}
\subjclass[2010]{05C30, 82B20}

\maketitle

\section{Introduction}\label{sec:intro}
A \emph{self-avoiding walk} (abbreviated to SAW) on a graph $G$
is a path that visits no point more than once. SAWs
were introduced in the chemical theory of polymerization (see Flory \cite{f}),
and their critical behaviour has been studied since by mathematicians and physicists
(see, for example, the book  \cite{ms} of Madras and Slade). The exponential
rate of growth of the number of SAWs is given by the so-called \emph{connective constant}
$\mu=\mu(G)$ of the graph.
Only few graphs of interest have
connective constants that are known exactly.

We explore the action of the Fisher transformation on
the set of SAWs of a cubic graph $G$. The transformation maps $G$ to
a new graph $F(G)$.
We have two sets of results.
First, the connective constants of $G$ and $F(G)$ 
satisfy a simple functional relation, and in addition, 
three of the principal critical exponents 
are invariant under the transformation. In addition, under repeated applications of 
the Fisher transformation, the graphs converge to a version
of the Sierpinski gasket, and the connective constants converge 
geometrically to the golden mean. 
See Theorems \ref{thm:main2} and \ref{thm:main2'} for formal
statements of these results.

\begin{figure}[htb]
 \centering
\includegraphics[width=0.8\textwidth]{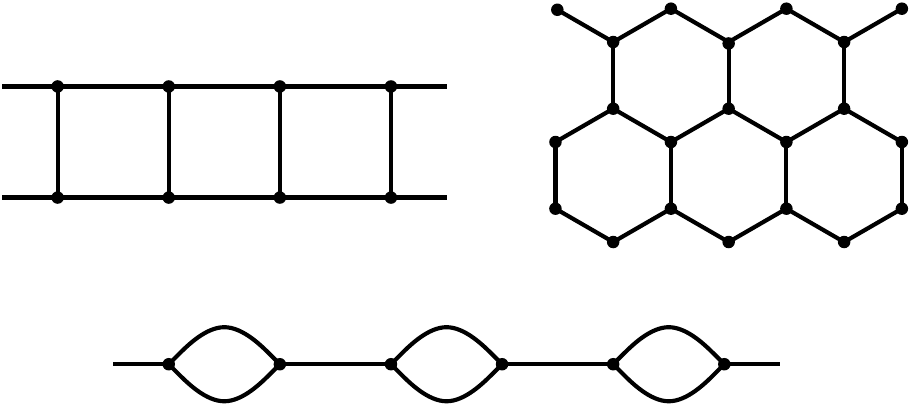}
  \caption{Three cubic graphs: the  (doubly-infinite) ladder graph $\LL$;
the hexagonal lattice $\HH$; the \emph{loop graph} $\sL_3$ obtained from $\ZZ$ by joining
every alternating pair of consecutive vertices by $2$ parallel edges.}
  \label{fig:ladder-hex}
\end{figure}

Our second set of results concerns the application
of the Fisher transformation to a bipartite graph $G$ one of whose vertex-sets
is cubic. As before, the ensuing connective constant may be expressed in terms
of that of $G$, and the critical exponents are invariant.
When applied to the hexagonal lattice $\HH$
(see Figure \ref{fig:ladder-hex}), this yields the lattice $\wt \HH$
illustrated in Figure \ref{fig:newlatt}. 
Nienhuis's proposed value $\mu(\HH) = \sqrt{2+\sqrt 2}$
has been proved recently by Duminil-Copin and Smirnov \cite{ds},
and the  value of $\mu(\wt\HH)$ may be deduced rigorously
from this, namely as
the root of the equation 
$$
x^{-3}+x^{-4} = \frac1{2+\sqrt 2}.
$$ 
See Theorem \ref{thm:main3}.

Section \ref{sec:notation} is devoted to basic definitions.
The Fisher transformation, and
its action on counts of SAWs, is described
in Section \ref{sec:fisher}, and our Theorems \ref{thm:main2}--\ref{thm:main3}
are stated there. 
The proofs of results are found
in Sections \ref{sec:main2} and \ref{sec:proof3}.

\begin{figure}[htb]
 \centering
    \includegraphics[width=0.5\textwidth]{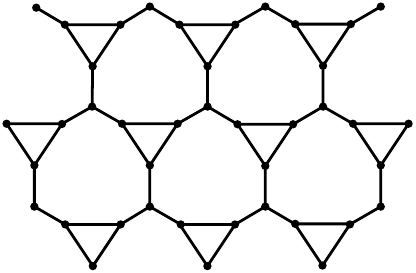}
  \caption{The lattice $\wt\HH$ derived from the hexagonal lattice $\HH$ by
applying the Fisher transformation at alternate vertices. Its connective constant
$\wt\mu$ is the root of the equation
$x^{-3}+x^{-4} = 1/(2+\sqrt 2)$.}
  \label{fig:newlatt}
\end{figure}

In a companion paper \cite{GrL1}, we study inequalities for the connective constants of
regular graphs. For an infinite, connected,  cubic, quasi-transitive graph $G$
(possibly with parallel edges), it is elementary that
\begin{equation}\label{ineq}
1 \le \mu(G) \le 2.
\end{equation}
If such $G$ is vertex-transitive, it is proved in \cite{GrL1} that 
$\sqrt 2 \le \mu(G) \le 2$, with equalities for the loop graph
$\sL_3$ of Figure \ref{fig:ladder-hex} and the $3$-regular 
tree, \resp. 

\section{Notation}\label{sec:notation}

All graphs studied henceforth in this paper will be assumed infinite, connected,
and simple (in that they have neither loops nor multiple edges). 
An edge $e$ with endpoints $u$, $v$ is written $e=\langle u,v \rangle$.
If $\langle u,v \rangle \in E$, we call $u$ and $v$ \emph{adjacent}
and write $u \sim v$.
The \emph{degree} of vertex $v$ is the number of edges
incident to $v$, denoted $\deg(v)$. A graph is called \emph{cubic}
if all vertices have degree $3$.
The \emph{graph-distance} between two vertices $u$, $v$ is the number of edges
in the shortest path from $u$ to $v$, denoted $d_G(u,v)$.

The automorphism group of the graph $G=(V,E)$ is
denoted $\sA=\sA(G)$.
The graph $G$ is
called \emph{quasi-transitive} if there exists a finite subset $W \subseteq V$ such that,
for $v \in V$ there exists $\a\in\sA $ such
that $\a v \in W$. We call such $W$ a \emph{fundamental domain},
and shall normally (but not invariably) take $W$
to be minimal with this property. The graph is 
called \emph{vertex-transitive} 
(or \emph{transitive}) if the singleton set $\{v\}$ is a fundamental
domain for some (and hence all) $v \in V$.

A \emph{walk} $w$ on $G$ is
an alternating sequence $v_0e_0v_1e_1\cdots e_{n-1} v_n$ of vertices $v_i$
and edges $e_i$ such that $e_i=\langle v_i, v_{i+1}\rangle$.
We write $|w|=n$ for the \emph{length} of $w$, that is, the number of edges in $w$.

Let $n \in \NN$, the natural numbers. 
A $n$-step \emph{self-avoiding walk} (SAW) 
on $G$ is  a walk containing $n$ edges
that includes no vertex more than once.
Let $\s_n(v)$ be the number of $n$-step SAWs 
 starting at $v\in V$. It was shown by Hammersley \cite{jmhII}
that, if $G$ is quasi-transitive, there exists a constant $\mu=\mu(G)$,
called the \emph{connective constant} of $G$,
such that
\begin{equation}
\label{connconst}
\mu= \lim_{n\to\oo}  \s_n(v)^{1/n}, \qquad v \in V.
\end{equation}

It will be convenient to consider also SAWs starting at `mid-edges'. We identify the
edge $e$ with a point (also denoted $e$) placed at the middle of $e$, 
and then consider walks that start and end at 
these mid-edges. Such a  walk is \emph{self-avoiding} if it visits no vertex or mid-edge 
more than once, and its \emph{length} is the number of vertices visited.

The minimum of two reals $x$, $y$ is denoted $x \wedge y$, and the maximum
$x \vee y$.

\section{Fisher transformation} \label{sec:fisher}

Let $G=(V,E)$ be a simple graph and let $v \in V$ have degree $3$. 
The so-called \emph{Fisher transformation} acts on $v$ by 
replacing it by a triangle, as illustrated in 
Figure \ref{fig:fisher}. This transformation has been valuable in the study of
the relations between Ising, dimer, and general vertex models 
(see \cite{bdet,fisher,zli,li}), and more recently of SAWs
on the Archimedean lattice denoted $(3,12^2)$ (see \cite{g,jg}).  
In the remainder of this paper, we make use of the Fisher transformation in the context of 
SAWs and the connective constant. It will be applied to cubic graphs, of which the 
hexagonal and square/octagon lattices are examples.

\begin{figure}[htb]
 \centering
    \includegraphics[width=0.5\textwidth]{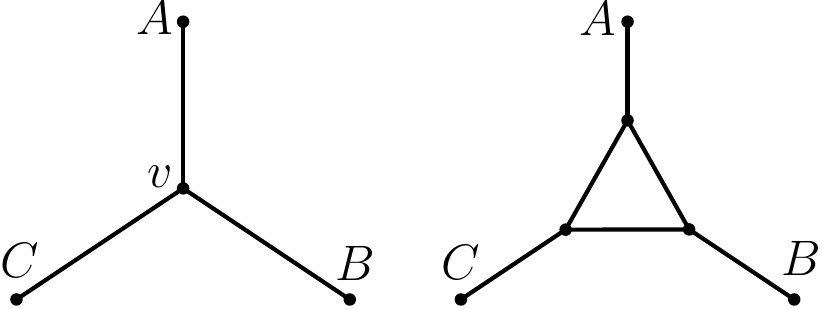}
  \caption{The Fisher triangulation of the star. Any triangle thus created
is called a \emph{Fisher triangle}.}
  \label{fig:fisher}
\end{figure}

It is convenient to work with graphs with well-defined connective constants, and
to this end we assume 
that $G=(V,E)$ is quasi-transitive and connected, so that its connective constant
is given by \eqref{connconst}.
We write $\fish(G)$ for the graph obtained
from the cubic graph $G$ by applying the Fisher transformation at every vertex. 
Obviously the automorphism group of $G$ induces an automorphism subgroup
of $\fish(G)$. 
We write $\phi= \tfrac12(\sqrt{5}+1)$ for the golden mean.
The next theorem may be known to others.

\begin{theorem}\label{thm:main2}
Let $G$ be an infinite, quasi-transitive, connected, cubic graph, and consider the
sequence $(G_k: k=0,1,2,\dots)$ defined by $G_0=G$ and $G_{k+1} = \fish(G_k)$.
Then
\begin{letlist}
\item The connective constants $\mu_k$ of the $G_k$ satisfy
$\mu_k^{-1} = g(\mu_{k+1}^{-1})$ where
$g(x)= x^2 + x^3$.
\item 
The sequence $\mu_k$ converges monotonely to $\phi$,
and 
\begin{equation*}
- \left(\tfrac 47\right)^k \le \mu_k^{-1} - \phi^{-1} 
\le \bigl[\tfrac12(7-\sqrt 5)\bigr]^k, \qquad k \ge 1.
\end{equation*}
\end{letlist}
\end{theorem}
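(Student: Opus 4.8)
The plan is to obtain part~(a) analytically from a length-generating-function identity, and then to read part~(b) off the resulting scalar recursion. Throughout I write $y_k=\mu_k^{-1}$, so that $y_k\in[\tfrac12,1]$ by \eqref{ineq}, and I use that $\mu_j$ is the reciprocal of the radius of convergence of the generating function $Z_j(x)=\sum_n a_n^{(j)}x^n$, where $a_n^{(j)}$ counts the $n$-edge SAWs of $G_j$ from a fixed mid-edge; this is legitimate by \eqref{connconst} and quasi-transitivity.

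For part~(a) I would build an explicit correspondence between SAWs on $G=G_k$ and SAWs on $\fish(G)=G_{k+1}$. A SAW on $G$ with $n$ edges passes through $n-1$ interior vertices; under $\fish$ each such vertex becomes a Fisher triangle that the walk must cross either the \emph{short} way (one triangle-edge) or the \emph{long} way (two triangle-edges), while each original edge survives as a single edge joining two triangles. These choices are independent, and every resulting walk on $\fish(G)$ is automatically self-avoiding, since distinct triangles are vertex-disjoint and each is entered at most once. Weighting by $x^{(\#\text{edges})}$, each interior vertex contributes a factor $x+x^2$ and each original edge a factor $x$, so a single $n$-edge SAW of $G$ accounts for fine-graph weight
\[
x^{n}(x+x^2)^{n-1}=(x+x^2)^{-1}\bigl(x^2+x^3\bigr)^{n}.
\]
Summing over all $n$-edge SAWs of $G$ and over $n$, and fixing the origin to be the mid-point of an inherited edge, this yields (up to bounded corrections $B(x)$ from the partially traversed triangle at the far endpoint) an identity of the form $Z_{k+1}(x)=A(x)\,Z_k\bigl(x^2+x^3\bigr)+B(x)$, in which the prefactor $A(x)=(x+x^2)^{-1}$ and the correction $B$ are analytic and non-vanishing on $(0,1)$. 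Since $g(x)=x^2+x^3$ is a strictly increasing bijection of $[0,1]$ onto $[0,2]$, the singularity of the right-hand side nearest the origin sits at the least positive $x$ with $x^2+x^3=y_k$, namely $x=g^{-1}(y_k)$; equating this with the radius $y_{k+1}$ of $Z_{k+1}$ gives $y_{k+1}=g^{-1}(y_k)$, i.e.\ $y_k=g(y_{k+1})$, which is~(a). I expect the only real obstacle to be making ``up to bounded corrections'' rigorous: one must fix mid-edge conventions so that the correspondence is genuinely a bijection onto all fine SAWs, and check that the endpoint and short-walk discrepancies leave the radius of convergence unchanged.

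For part~(b) I analyse the scalar recursion $y_k=g(y_{k+1})$ on $[\tfrac12,1]$. Its interior fixed points solve $x=x^2+x^3$, i.e.\ $x(x^2+x-1)=0$, whose only root in $(0,1)$ is $\phi^{-1}=\tfrac12(\sqrt5-1)$; moreover $g(x)<x$ for $x<\phi^{-1}$ and $g(x)>x$ for $x>\phi^{-1}$. As $g$ is increasing, $g^{-1}$ maps each side of $\phi^{-1}$ into itself and strictly toward $\phi^{-1}$, so the iterates $y_k$ are monotone (increasing if $y_0<\phi^{-1}$, decreasing if $y_0>\phi^{-1}$) and converge to $\phi^{-1}$; equivalently $\mu_k\to\phi$ monotonely. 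In particular every $y_k$, and hence every relevant $\xi$ below, lies between $y_0$ and $\phi^{-1}$, so within $[\tfrac12,1]$.

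It remains to quantify the rate. Put $\varepsilon_k=y_k-\phi^{-1}=\mu_k^{-1}-\phi^{-1}$. Applying the mean value theorem to $y_k=g(y_{k+1})$ and $\phi^{-1}=g(\phi^{-1})$ gives $\varepsilon_k=g'(\xi_k)\,\varepsilon_{k+1}$ for some $\xi_k$ between $y_{k+1}$ and $\phi^{-1}$; since $g'>0$, the signs of $\varepsilon_k$ and $\varepsilon_{k+1}$ agree. Because $g'(x)=2x+3x^2$ is increasing, I bound $g'(\xi_k)$ on whichever side $y_0$ lies. If $y_0>\phi^{-1}$ then $\xi_k\in[\phi^{-1},1]$, so $g'(\xi_k)\ge g'(\phi^{-1})=3-\phi^{-1}=\tfrac12(7-\sqrt5)$, giving $0\le\varepsilon_{k+1}\le\tfrac{2}{7-\sqrt5}\varepsilon_k$ and hence $\varepsilon_k\le\bigl(\tfrac{2}{7-\sqrt5}\bigr)^{k}$, using $\varepsilon_0\le1-\phi^{-1}<1$. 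If $y_0<\phi^{-1}$ then $\xi_k\in[\tfrac12,\phi^{-1}]$, so $g'(\xi_k)\ge g'(\tfrac12)=\tfrac74$, giving $\tfrac47\varepsilon_k\le\varepsilon_{k+1}\le0$ and hence $\varepsilon_k\ge-\bigl(\tfrac47\bigr)^{k}$, using $\varepsilon_0\ge\tfrac12-\phi^{-1}>-1$. In each case the competing inequality is trivial because its base is positive, so the two estimates combine to the stated two-sided bound (indeed with upper rate $\tfrac{2}{7-\sqrt5}=1/g'(\phi^{-1})$, which is sharper than, and implies, $[\tfrac12(7-\sqrt5)]^k$). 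All the substance lies in~(a); part~(b) is then routine.
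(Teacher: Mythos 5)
Your proposal is correct and is essentially the paper's own argument: the paper likewise encodes each Fisher-triangle crossing by the weight $x^2+x^3=g(x)$ per original step, via the exact identity $Z_0(x^2(1+x))=Z_1^*(x)$ for SAWs on $G_1$ that start and end at mid-edges of inherited edges, and it handles the endpoint discrepancies you flag not by an identity but by the two-sided bounds $Z_1^*(x)\le Z_1(x)\le[1+2x+2x^2+2x^3]^2Z_1^*(x)+6|W_0|(1+x+x^2)$ --- a sandwich is the right formulation, since your claimed relation $Z_{k+1}(x)=A(x)Z_k(g(x))+B(x)$ cannot hold exactly (the admissible terminal extensions into a triangle depend on whether that triangle was already visited), though this affects nothing about the radius of convergence, which is all that is used. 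Part (b) matches the paper's sketch (both rest on $g'(\tfrac12)=\tfrac74$ and $g'(\phi^{-1})=\tfrac12(7-\sqrt5)$), and your sharper upper rate $\bigl[2/(7-\sqrt5)\bigr]^{k}=\bigl[\tfrac12(7-\sqrt5)\bigr]^{-k}$ is evidently the bound the theorem intends, the stated $\bigl[\tfrac12(7-\sqrt5)\bigr]^{k}$ being trivially true as written since its base exceeds $1$.
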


Theorem \ref{thm:main2} provokes the question of the existence of a 
graph limit of repeated application of the Fisher transformation. 
It is easily seen that 
the limiting graph comprises two copies of the Sierpinski gasket,
as illustrated in 
Figure \ref{fig:fisherlimit}.

\begin{figure}[htb]
 \centering
    \includegraphics[width=0.7\textwidth]{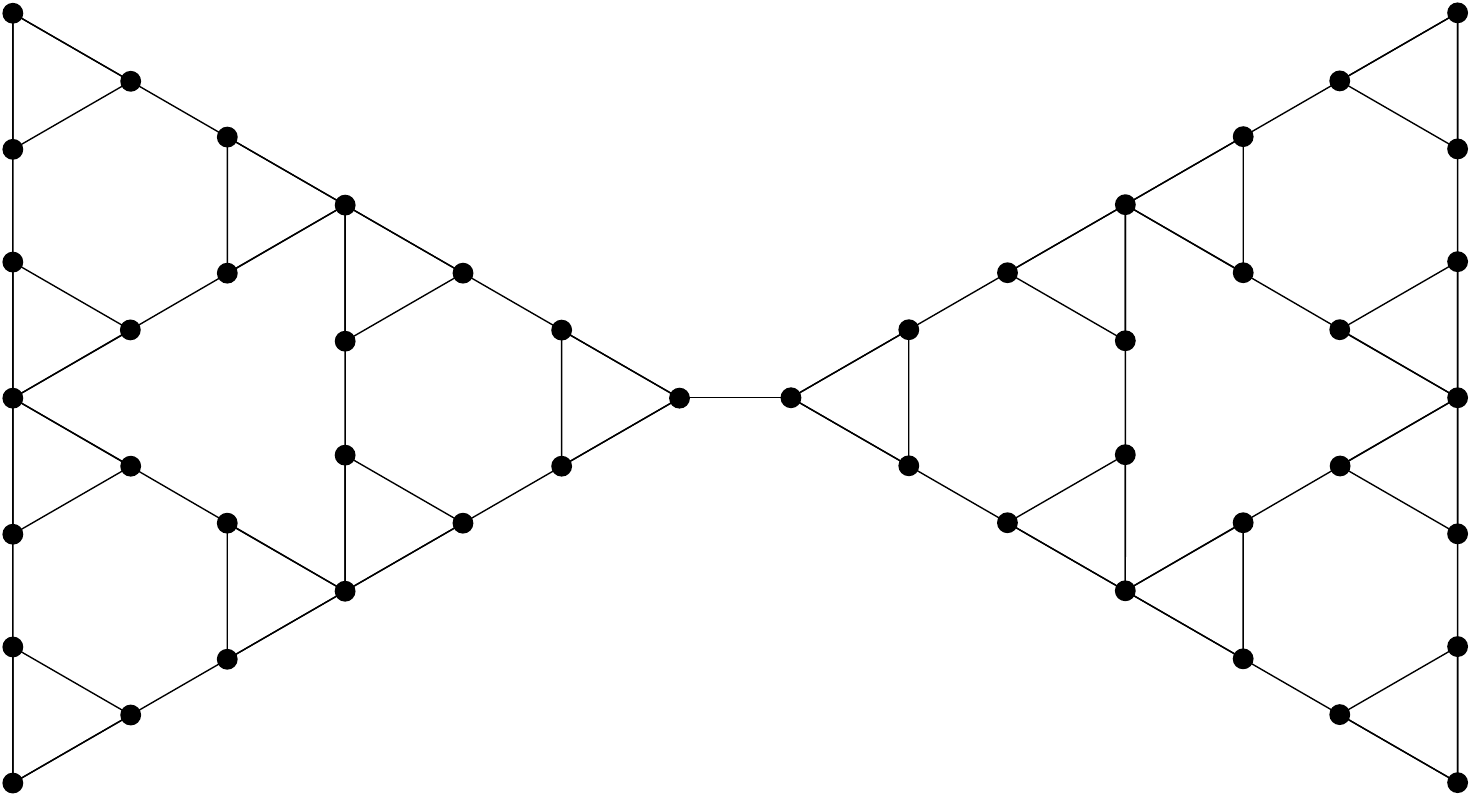}
  \caption{Through repeated application of the 
Fisher transformations to a single
edge with endvertices of degree three, one arrives
at a graph comprising two Sierpinski gaskets.}
  \label{fig:fisherlimit}
\end{figure}

By Theorem \ref{thm:main2}(b), either $\mu_k \downarrow \phi$ or $\mu_k \uparrow \phi$.
The \emph{decreasing} limit holds  if and only if $\mu_0 \ge\phi$. We present no satisfactory
characterization of graphs $G$ for which $\mu(G) \ge \phi$ beyond noting
that this holds whenever $G$ contains as a subgraph a copy of a graph with connective constant $\phi$,
such as the ladder graph $\LL$ (or the semi-infinite ladder graph) of Figure \ref{fig:ladder-hex}.
Furthermore, if $\mu(G) > \phi$ and $\wt G$ is obtained from $G$ by a sequence of
Fisher transformations, then $\mu(\wt G) \ge \phi$.

We turn to the topic of \emph{critical exponents}, beginning with
a general introduction for the case when there exists a periodic, locally finite embedding
of $G$ into $\RR^d$ with $d \ge 2$. The case of general $G$ has not not been 
studied extensively, and most attention has been paid to the 
hypercubic lattice $\ZZ^d$.
It is believed 
 (when $d \ne 4$) that there is a power-order correction, 
in the sense that there exists $A_v>0$ and an exponent 
$\g \in \RR$ such that
\begin{equation}\label{defgamma}
\s_n(v)\sim A_v n^{\gamma-1}\mu^n \qquad \text{as } n \to\oo,
\qquad v \in V.
\end{equation}
Furthermore, 
the value of the exponent $\g$ is believed to depend on $d$ and not further on the 
choice of graph $G$.
When $d=4$, \eqref{defgamma} should hold with $\gamma=1$
and subject to the inclusion
on the right side of the logarithmic correction
term  $(\log n)^{1/4}$.
See \cite{bdgs,ms} for  accounts of critical exponents for SAWs.

Let $v \in V$ and
\begin{equation}
Z_{v,w}(x)=\sum_{n=0}^\oo \s_n(v,w)x^k, \qquad w \in V,\ x>0,
\label{defz}
\end{equation} 
where $\s_n(v,w)$ is the number of $n$-step SAWs with endpoints
$v$, $w$.
It is known under certain circumstances that the generating 
functions $Z_{v,w}$ have radius of convergence $\mu^{-1}$ 
(see \cite[Cor.\ 3.2.6]{ms}),
and it is believed that there exists an exponent $\eta$ and constants 
$A'_v>0$  such that
\begin{equation}\label{defeta}
Z_{v,w}(\mu^{-1})\sim A_v'{d_G(v,w)}^{-(d-2+\eta)}
\qquad \text{as } d_G(v,w)\to\oo.
\end{equation}

Let $\Si_n(v)$ be the set of $n$-step SAWs from $v$,
and write $\langle \cdot \rangle_n^v$ for expectation with respect to
uniform measure on $\Sigma_n(v)$. Let
$\|\pi\|$ be the graph-distance between the endpoints of 
a SAW $\pi$.
It is believed (when $d\ne 4$) that
there exists an exponent $\nu$ and constants
$A''_v>0$ such that  
\begin{equation}\label{defnu}
\langle \|\pi\|^2\rangle_n^v \sim A_v'' n^{2\nu}, \qquad v \in V.
\end{equation}
As above, this should hold for $d=4$ with $\nu=\frac12$ and subject to the
inclusion of the correction term $(\log n)^{1/4}$.

The above three exponents  are believed to be related to one another 
through the so-called \emph{Fisher relation}
\begin{equation}\label{fisher-rel}
\gamma = \nu(2-\eta).
\end{equation}

It is convenient to work with 
definitions of critical exponents that do not depend on
an assumption of dimensionality, and thus we proceed as follows.
Let $G$ be an infinite, connected, quasi-transitive graph with 
connective constant $\mu$ and fundamental domain $W$.
Let $X$ be the set of edges incident to vertices in  $W$,
and let $\Sigma$ be the set of SAWs on $G$ starting at mid-edges in $X$.
We define the function
\begin{equation*}\label{expatn}
Y(x,y)=
\sum_{\pi\in\Sigma}
\frac{x^{|\pi|}}{|\pi|^y}, \qquad x> 0,\ y\in\RR.
\end{equation*}
(The denominator is interpreted as 1 when $|\pi|=0$.)
For fixed $x$, $Y(x,y)$ is non-increasing in $y$. 
Let $\g=\g(G)\in[-\oo,\oo]$ be such that
$$
Y(\mu^{-1},y) \begin{cases} =\infty &\text{if } y<\gamma,\\ 
<\oo &\text{if } y>\g.
\end{cases}
$$
We shall assume that $-\oo < \g<\oo$. 
It will be convenient at times to  assume more about
the number $\s_n$ of $n$-step SAWs from mid-edges in $X$, 
namely that   there exist constants $C_i=C_i(W) \in(0,\infty)$ and a slowly varying function $L$ 
such that
\begin{equation}\label{def:slow}
C_1 L(n)n^{\gamma-1}\mu^n\leq \sigma_{n}\leq C_2 L(n)n^{\gamma-1}\mu^n, \qquad n \ge 1.
\end{equation}

Let
\begin{equation}\label{def0}
V(z)=\sum_{n=1}^{\infty}\frac{1}{n^{2z+1}} \langle \|\pi\|^2 \rangle_n,
\qquad z \in [-\oo,\oo],
\end{equation}
where $\langle \cdot \rangle_n$ denotes the uniform average over the
set $\Sigma_n$ of $n$-step SAWs in $\Sigma$.
Thus, $V(z)$ is non-increasing in $z$, and we let  
$\nu=\nu(G)\in[-\oo,\oo]$ be such that
$$
V(z) \begin{cases} =\infty &\text{if } z < \nu,\\
<\infty &\text{if } z > \nu.
\end{cases}
$$

Let $\a W$ denote the image of $W$ under an automorphism $\a\in\sA$, with incident
edges $\a X$, and let
\begin{equation*}
Z_\a(x)=\sum_{\pi\in\Sigma(\a)} x^{|\pi|},
\end{equation*}
where $\Sigma(\a)$ is the subset of $\Sigma$ containing SAWs ending 
at mid-edges in $\a X$.
We assume there exists $\eta =\eta(G) \in [-\oo,\oo]$ such that,
for any sequence of automorphisms $\a$ satisfying $d_G(W,\a W)\to\oo$,
\begin{equation}\label{dd1}
Z_\a(\mu^{-1})d_G(W,\a W)^{w} 
\begin{cases} \to 0 &\text{if } w<\eta,\\
\to\infty &\text{if } w>\eta. 
\end{cases}
\end{equation}
The $\eta$ of \eqref{defeta} should agree with that defined here when $d=2$.

It is easily seen that the values of $\g$, $\eta$, $\nu$ do not depend on the choice of
fundamental domain $W$. 

We consider now the effect on critical exponents of
the Fisher transformation. 
Let $W_0$ be a minimal fundamental domain of $G_0 := G$, 
with incident edge-set $X_0 := X$ as above. Write $W_1=F(W_0)$, 
the set of vertices of the 
triangles formed by the Fisher transformation at vertices in $W_0$,
and $X_1$ for the set of edges of $G_1$ incident to vertices in $W_1$.
It may be seen that $W_1$ is a fundamental domain of $G_1$.

\begin{theorem}\label{thm:main2'}
Let $G_0$ be an infinite, quasi-transitive, connected, cubic graph.
Assume that $|\g(G_0)|<\oo$ and that $\eta(G_1)$ exists.
\begin{letlist}
\item The exponents $\g$, $\eta$ of $G_0$ and $G_1$ are equal.
\item  Let $\sigma_{n,k}$ be
the number of $n$-step SAWs on $G_k$ from mid-edges in $X_k$.
Assume the $\s_{n,k}$ satisfy \eqref{def:slow} for constants
$C_{i,k}$ and a common slowly varying function $L$. Then
the exponents $\nu$ of $G_0$ and $G_1$ are equal.
\end{letlist}
\end{theorem}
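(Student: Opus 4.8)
The plan is to build everything on a single length-rescaling correspondence between SAWs on $G_0$ and on $G_1=\fish(G_0)$. The Fisher transformation replaces each vertex of $G_0$ by a triangle, and a SAW $\pi_0$ on $G_0$ that visits $m$ vertices lifts to SAWs $\pi_1$ on $G_1$ as follows: at each visited vertex the lift crosses the corresponding \emph{Fisher triangle} from its entry vertex to its exit vertex either directly (visiting $2$ triangle-vertices) or the long way round (visiting $3$). Choosing the long way at $j$ of the $m$ triangles gives $\binom mj$ distinct lifts, each with $|\pi_1|=2m+j$ vertices. All lifts of a fixed $\pi_0$ share the same pair of endpoint mid-edges, hence the same $\|\pi_1\|$; and since each original edge is preserved while each intermediate triangle costs one extra edge along any geodesic, $\|\pi_1\|=2\|\pi_0\|+O(1)$, so $\|\pi_1\|^2=4\|\pi_0\|^2(1+o(1))$. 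I would first record the two facts driving all three cases: $\sum_{j=0}^m\binom mj x^{2m+j}=(x^2+x^3)^m=g(x)^m$, and $g(\mu_1^{-1})=\mu_0^{-1}$ from Theorem \ref{thm:main2}(a), equivalently $\mu_1^{-2}(1+\mu_1^{-1})=\mu_0^{-1}$. The one delicate point is that a SAW on $G_1$ may fail to project to a self-avoiding walk when it begins at a triangle mid-edge or ends by re-entering a triangle it has already met; such walks differ from genuine lifts only by a bounded excursion near an endpoint, and I expect them to contribute a fixed, strictly positive factor to each generating function below, not moving any threshold.

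For part (a) I would read off $\g$ and $\eta$ at the critical point. Summing $x^{|\pi_1|}$ over the lifts of a fixed $\pi_0$ gives $g(x)^{|\pi_0|}$, so for $\g$,
\[
Y_1(\mu_1^{-1},y)=\sum_{\pi_0}\Big(\sum_{j=0}^{|\pi_0|}\binom{|\pi_0|}{j}\frac{(\mu_1^{-1})^{2|\pi_0|+j}}{(2|\pi_0|+j)^y}\Big)\asymp\sum_{\pi_0}\frac{g(\mu_1^{-1})^{|\pi_0|}}{|\pi_0|^y}=Y_0(\mu_0^{-1},y),
\]
where the $\asymp$ uses $2m\le 2m+j\le 3m$ to replace $(2m+j)^{-y}$ by $m^{-y}$ up to constants depending only on $y$. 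For each fixed $y$ the two sides are simultaneously finite or infinite, so their thresholds agree and $\g(G_1)=\g(G_0)$. For $\eta$, the same collapse gives $Z_\alpha^{(1)}(\mu_1^{-1})\asymp Z_\alpha^{(0)}(\mu_0^{-1})$ for the automorphism-indexed sums (the correspondence respects the automorphism of $G_1$ induced by $\alpha$), while $d_{G_1}(W_1,\alpha W_1)=2d_{G_0}(W_0,\alpha W_0)+O(1)$. Multiplying, and using that $2^w$ is a constant for fixed $w$, shows that $Z_\alpha^{(1)}(\mu_1^{-1})d_{G_1}(W_1,\alpha W_1)^w$ and $Z_\alpha^{(0)}(\mu_0^{-1})d_{G_0}(W_0,\alpha W_0)^w$ tend to $0$, or to $\infty$, together, so $\eta(G_1)=\eta(G_0)$.

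Part (b) is where \eqref{def:slow} enters. Writing $D_k(n)=\sum_{|\pi|=n}\|\pi\|^2$ on $G_k$, the correspondence gives $D_1(n)\asymp 4\sum_m\binom{m}{n-2m}D_0(m)$ and $\sigma_{n,1}=\sum_m\binom{m}{n-2m}\sigma_{m,0}$, so $\langle\|\pi\|^2\rangle_n$ on $G_1$ is $4$ times a weighted average of $\langle\|\pi\|^2\rangle_m$ on $G_0$ over $n/3\le m\le n/2$. I would then compute $V_1(z)$ by interchanging the order of summation:
\[
V_1(z)\asymp\sum_m\sigma_{m,0}\,\langle\|\pi\|^2\rangle_m^{(0)}\sum_{n=2m}^{3m}\binom{m}{n-2m}\frac{n^{-(2z+1)}}{\sigma_{n,1}},
\]
the superscript recording the graph. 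On $2m\le n\le 3m$ one has $n\asymp m$, so by \eqref{def:slow} (with the common slowly varying $L$ and the common $\g$ from part (a)) the factor $n^{-(2z+1)}/\sigma_{n,1}$ is comparable to $m^{-(2z+1)}/(L(m)m^{\g-1}\mu_1^{2m})$ times $\mu_1^{-(n-2m)}$. The binomial sum collapses via $\sum_{j=0}^m\binom mj\mu_1^{-j}=(1+\mu_1^{-1})^m$, and the cancellation $(1+\mu_1^{-1})/\mu_1^2=\mu_0^{-1}$ turns the inner sum into $\asymp m^{-(2z+1)}\mu_0^{-m}/(L(m)m^{\g-1})$. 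Applying \eqref{def:slow} once more to $\sigma_{m,0}$ cancels the remaining $L(m)m^{\g-1}\mu_0^m$, leaving $V_1(z)\asymp\sum_m m^{-(2z+1)}\langle\|\pi\|^2\rangle_m^{(0)}=V_0(z)$, whence $V_0,V_1$ share a finiteness threshold and $\nu(G_1)=\nu(G_0)$.

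The main obstacle is precisely this $\nu$ computation. Unlike $\g$ and $\eta$, which are evaluated at the single point $x=\mu^{-1}$ where the length-rescaling collapses cleanly, the displacement exponent compares SAWs of fixed length $n$ on $G_1$ with SAWs on $G_0$ whose lengths $m$ fill the entire band $[n/3,n/2]$, so one scale on $G_1$ mixes a macroscopic range of scales on $G_0$. The interchange-of-summation identity is what disentangles this, trading the awkward averaging over $m$ for an inner binomial sum that collapses; the two-sided bounds of \eqref{def:slow} are exactly what let me replace $\sigma_{n,1}$ and $\sigma_{m,0}$ by closed forms and extract the cancellation $(1+\mu_1^{-1})/\mu_1^2=\mu_0^{-1}$. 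A secondary, routine, difficulty is the bookkeeping for walks beginning or ending inside a Fisher triangle and for the additive $O(1)$ in $\|\pi_1\|=2\|\pi_0\|+O(1)$; I expect both to yield only bounded multiplicative factors, but they must be verified to legitimize the $\asymp$ statements above.
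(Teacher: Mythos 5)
Your proposal is correct and follows essentially the same route as the paper's proof: the lift correspondence whose generating function collapses to $(x^2+x^3)^{|\pi_0|}$, evaluated at criticality via $g(\mu_1^{-1})=\mu_0^{-1}$ from Theorem \ref{thm:main2}(a); sandwich bounds absorbing walks that start or end inside Fisher triangles; and, for $\nu$, the binomial collapse $\sum_{l}\binom{n}{l}\mu_1^{-(2n+l)}=\mu_0^{-n}$ combined with \eqref{def:slow}. The paper organizes the boundary bookkeeping through explicit starred quantities ($Y_1^*$, $V_1^*$, $Z^*_{\a,1}$) where you defer it to a claimed bounded multiplicative factor, but the substance of the argument is identical.
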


Our final result concerns the effect of the Fisher transformation 
when applied to just one of the vertex-sets of a bipartite graph.
Let $G=(V,E)$ be bipartite with vertex-sets $V_1$, $V_2$ coloured white and black, 
\resp. We think of $G$ as a graph together with a colouring $\chi$, and 
the \emph{coloured-automorphism group} $\sAc=\sAc(G)$ of the pair $(G,\chi)$ is the set
of maps $\phi:V \to V$ which preserve both graph structure and colouring.
The coloured graph is \emph{quasi-transitive} if there exists a finite 
subset $W \subseteq V$ such
that: for all $v\in V$, there exists $\a\in\sAc$ such that $\a v\in W$ and $\chi(v) = \chi(\a v)$. 
As before, such a set $W$ is called a \emph{fundamental domain}.

\begin{theorem}\label{thm:main3}
Let $G$ be an infinite,  connected,
bipartite graph with vertex-sets coloured black and white, and suppose that
the coloured graph $G$ is quasi-transitive, and every black vertex has degree $3$. 
Let $\wt G$ be obtained by applying the Fisher transformation at each black vertex. 
\begin{letlist}
\item The connective constants $\mu$ and $\wt\mu$ of $G$ and $\wt G$, \resp, satisfy 
$\mu^{-2} = h(\wt\mu^{-1})$ where
$h(x) = x^3 + x^4$.
\item 
Under the corresponding assumptions of Theorem \ref{thm:main2'}, the exponents $\g$, $\eta$, $\nu$ are the same for $G$ 
as for $\wt G$.
\end{letlist}
\end{theorem}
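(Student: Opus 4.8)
The plan is to mirror the strategy behind Theorems \ref{thm:main2} and \ref{thm:main2'}, the one essential novelty being that the Fisher transformation is now applied at only half of the vertices, which is exactly what produces the square on the left of $\mu^{-2}=h(\wt\mu^{-1})$. For part (a) I would set up a correspondence between self-avoiding walks on $\wt G$ and those on $G$ obtained by collapsing each Fisher triangle back to its black vertex. The legs of each triangle are the surviving edges of $G$, so the mid-edges of $X$ embed in those of $\wt G$, and I would count mid-edge SAWs of $\wt G$ whose endpoints lie on surviving edges; the SAWs that begin or end inside a triangle contribute only bounded boundary corrections and may be absorbed into multiplicative constants. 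Working throughout with the convention that the length of a mid-edge SAW is the number of vertices visited, a SAW $\pi$ on $G$ that meets $b$ black and $w$ white vertices lifts (away from its two endpoints) to $2^{b}$ SAWs on $\wt G$: at each crossed triangle there are two realisations, the direct edge between the two relevant corners or the detour through the third corner, visiting two or three new vertices respectively. Writing $\wt x=\wt\mu^{-1}$ as generating variable for $\wt G$ and $g(x)=x^2+x^3$ as in Theorem \ref{thm:main2}, the lifts of $\pi$ thus carry total weight $\wt x^{\,w}(\wt x^2+\wt x^3)^{b}=\wt x^{\,w}g(\wt x)^{b}$.

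Next I would exploit bipartiteness: the colours along any SAW alternate, so $|b-w|\le 1$ on every $\pi$. With $h(x)=x\,g(x)=x^3+x^4$ and $m=b+w$ the length of $\pi$, the weight above equals $h(\wt x)^{\lfloor m/2\rfloor}$ up to a single bounded factor (namely $1$, $g(\wt x)$, or $\wt x$). Summing over $\pi$ and grouping by length gives
$$
Z_{\wt G}(\wt x)\asymp\sum_{m} a_m\, h(\wt x)^{m/2},
$$
where $a_m$ is the number of length-$m$ mid-edge SAWs of $G$ and $\asymp$ hides bounded factors. Since $\sum_m a_m t^m$ has radius of convergence $\mu^{-1}$, the right side is finite precisely when $h(\wt x)<\mu^{-2}$; as $h$ is strictly increasing on $(0,\infty)$ this identifies the radius of convergence $\wt\mu^{-1}$ of $Z_{\wt G}$ as the solution of $h(\wt\mu^{-1})=\mu^{-2}$, which is the claimed relation.

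For part (b) I would argue, following Theorem \ref{thm:main2'}, that all three exponents are invariants of the abscissa-of-convergence data preserved by the correspondence. The collapse map distorts length only by a bounded factor, since each crossed triangle replaces one vertex by two or three and hence $|\wt\pi|\in[\tfrac32 m,2m]$ asymptotically; and it distorts graph distance between endpoints only by a bounded factor, since each black vertex becomes a gadget of bounded diameter, so $\|\wt\pi\|\asymp\|\pi\|$. Consequently the matched critical points $\wt x=\wt\mu^{-1}$ and $x=\mu^{-1}$ (related by $\sqrt{h(\wt\mu^{-1})}=\mu^{-1}$) give $Y_{\wt G}(\wt\mu^{-1},y)$, $V_{\wt G}(z)$ and $Z_{\alpha}(\wt\mu^{-1})$ the same finiteness thresholds in $y$, $z$ and $w$ as their counterparts on $G$, up to the harmless polynomial reparametrisations $1/|\wt\pi|^y\asymp 1/m^y$. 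As in Theorem \ref{thm:main2'}(b), the statement for $\nu$ additionally invokes the slowly-varying hypothesis \eqref{def:slow} to transfer the mean-square displacement asymptotics through the bounded length distortion.

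The step I expect to be the main obstacle is the bookkeeping that upgrades the relation $\asymp$ above to an honest two-sided bound. One must verify that SAWs of $\wt G$ beginning, ending, or turning inside a triangle are genuinely negligible — bounded fibres near the two endpoints, and no further freedom in the bulk because self-avoidance forces any triangle that is entered to be exited at once — and one must control the parity bookkeeping carefully enough that the factor $2$ in $\mu^{-2}$, the signature of transforming a single colour class, emerges with a controlled bounded error rather than merely heuristically from $b\approx w\approx m/2$.
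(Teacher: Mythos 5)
Your proposal is correct and takes essentially the same route as the paper: restrict to SAWs on $\wt G$ whose endpoints are mid-edges of original edges (walks starting or ending inside a triangle contributing only bounded corrections), collapse Fisher triangles to obtain the $2^b$-to-one lift correspondence in which each black vertex contributes $x^2+x^3$ and each white vertex $x$, and let bipartite alternation turn this into $h(x)=x^3+x^4$ tested against $\mu^{-2}$; part (b) is in both treatments a rerun of the proof of Theorem \ref{thm:main2'}. The only difference is presentational: where you handle the black/white parity via $|b-w|\le 1$ and bounded multiplicative factors, the paper does so through the two-variable partition function $Z(x,y)$, the even-walk version $\Ze(x,y)$, and the identity $\wt Z(p,q,r)=Z(q^2(1+p),r)$, which is the same bookkeeping in generating-function form.
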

 
Theorem \ref{thm:main3}(a) implies an exact value of a connective constant that does
not appear to have been noted previously. Take $G=\HH$, the hexagonal
lattice with connective constant $\mu= \sqrt{2+\sqrt 2} \approx 1.84776$,
see \cite{ds}. The decorated
lattice $\wt \HH$ is illustrated in Figure \ref{fig:newlatt}, and has connective constant $\wt\mu$
satisfying $\mu^{-2} = h(\wt\mu^{-1})$, 
which may be solved to obtain $\wt\mu \approx 1.75056$.

The proofs of Theorems \ref{thm:main2}--\ref{thm:main2'} and 
\ref{thm:main3} are found  in
Sections \ref{sec:main2} and \ref{sec:proof3}, \resp.

\section{Proof of Theorems \ref{thm:main2}--\ref{thm:main2'}}\label{sec:main2}

\begin{proof}[Proof of Theorem \ref{thm:main2}]

Let $G_0=(V_0,E_0)$ be an infinite, connected, quasi-transitive, cubic graph. 
The graph $G_1 = F(G_0)$ is also quasi-transitive
 and cubic. It suffices for part (a) to show that the connective constants $\mu_k$
of the $G_k$ satisfy 
\begin{equation}\label{10}
g(\mu_1^{-1}) = \mu_0^{-1}.
\end{equation}
By \eqref{ineq}, $\mu_k \in [1,2]$ for $k=1,2$.

Let $W_0$ be a minimal fundamental domain of $G_0$, and let 
$X_0$ be the subset of $E_0$ comprising all edges incident 
to vertices in $W_0$. Write $W_1=F(W_0)$, the set of vertices of the 
triangles formed by the Fisher transformation at vertices in $W_0$,
and $X_1$ for the set of edges of $G_1$ incident to vertices in $W_1$.
It may be seen that $W_1$ is a fundamental domain of $G_1$.

It is convenient to work with SAWs that start and end at mid-edges.
Note that the mid-edges 
of $E_0$ (\resp, $X_0$) may be viewed as  mid-edges of 
$E_1$ (\resp, $X_1$). 

For $k=0,1$, the partition functions of SAWs on $G_k$ are the polynomials
\begin{equation*}
Z_k(x)=\sum_{\pi\in \Sigma_k} x^{|\pi|}, \qquad x >0,
\end{equation*}
where the sum is over the set $\Sigma_k$ of
SAWs starting at mid-edges of $X_k$. Similarly, we set
\begin{equation*}
Z_1^*(x) = \sum_{\pi \in \Sigma_1^*} x^{|\pi|},
\end{equation*}
where the sum is over the set $\Sigma_1^*$ of SAWs on 
$G_1$ starting at mid-edges of $X_0$ and ending at mid-edges of $E_0$.
For $k=0,1$,
\begin{equation}\label{225}
Z_k(x) 
\begin{cases} <\oo &\text{if } x < \mu_k^{-1},\\
=\oo &\text{if } x > \mu_k^{-1}.
\end{cases}
\end{equation}

The following basic argument
formalizes a method known already in the special case of the 
hexagonal lattice, see for example \cite{g,jg}.
Since $\Sigma_1^* \subseteq \Sigma_1$, we have 
\begin{equation}\label{221}
Z_1^*(x) \le Z_1(x).
\end{equation}
Let  $N(\pi)$ be the number of endpoints of a SAW 
$\pi\in \Sigma_1$ 
that are mid-edges of $E_0$. 
The set $\Sigma_1$ may be partitioned into three sets.
\begin{letlist}
\item If $N(\pi)=2$, then $\pi$ contributes
to $Z_1^*$.
\item $\pi$ may be a  walk within a single Fisher triangle.
\item If (b) does not hold and $N(\pi)\le 1$, any endpoint not in $E_0$ may
be moved by one, two, or three steps along $\pi$ to obtain a shorter SAW in $\Sigma_1^*$.
\end{letlist}
By considering the numbers of SAWs in each subcase of (c), we find that
\begin{equation}\label{222}
Z_1(x) \leq  
[1+2x+2x^2+2x^3]^2 Z_1^*(x) +6|W_0|(1+x+x^2).
\end{equation}
where the last term corresponds to case (b).
By \eqref{221}--\eqref{222}, 
$$
Z_1(x)<\oo \quad\Leftrightarrow\quad Z_1^*(x)<\oo,
$$
so that, by \eqref{225}, 
\begin{equation}\label{223}
Z_1^*(x) \begin{cases} <\oo &\text{if } x < \mu_1^{-1},\\
=\oo &\text{if } x > \mu_1^{-1}.
\end{cases}
\end{equation}

With a SAW in $\Sigma_1^*$ we associate a SAW in $\Sigma_0$ 
by shrinking each Fisher triangle to a vertex. Each $n$-step SAW  
in $\Sigma_0$ arises thus from $2^n$ SAWs in $\Sigma_1^*$, because 
each triangle may be circumnavigated in either of $2$ directions. 
Therefore,
\begin{equation}\label{r2}
Z_0(x^2(1+x))=Z_{1}^*(x),
\end{equation}
and \eqref{10} follows by \eqref{225} and \eqref{223}.

We turn to Theorem \ref{thm:main2}(b). By \eqref{ineq},
$\mu_0^{-1}  \in [\frac12, 1]$. The function
$g$ is a bijection from $[\frac12,1]$ to $[\frac38,2]$.
Furthermore, $g$ is strictly convex on $[\frac12,1]$ with
fixed point $\phi^{-1}$. By \eqref{10} applied iteratively,
$\mu_k^{-1} \to \phi^{-1}$ as $k\to\oo$, and the limit is monotone. 
The bounds on $\mu_k^{-1} - \phi^{-1}$ follow
from the facts that $g'(\frac12) = \frac74$ and 
$g'(\phi^{-1}) = \frac12(7-\sqrt 5)$.
\end{proof}

\begin{proof}[Proof of Theorem \ref{thm:main2'}]

Let
\begin{equation*}\label{expatn2}
Y_{k}(x,y)=
\sum_{\pi\in \Sigma_k}
\frac{x^{|\pi|}}{|\pi|^y},
\quad Y_k^*(x,y)=\sum_{\pi\in\Sigma^*_k} \frac{x^{|\pi|}}{|\pi|^y}, 
 \qquad x>0,\ y\in\RR,
\end{equation*}
where the denominator is interpreted as $1$ when $|\pi|=0$.
Since $\Sigma_1^* \subseteq \Sigma_1$,
\begin{equation*}
Y_1^*(x,y)\leq Y_1(x,y).
\end{equation*}
Since every SAW in $\Sigma_1 \sm \Sigma_1^*$ either is an extension of a 
SAW in $\Sigma_1^*$ at the starting point, or endpoint (or both), 
by at most $3$ steps, or is a short
walk in a single Fisher triangle,
\begin{equation*}
Y_1(x,y)\leq 7^{|y|}[1+2x+2x^2+2x^3]^2 Y_1^*(x,y)+6|W_0|\left(1+x+\frac{x^2}{2^y}\right).
\end{equation*}

Therefore,
\begin{equation}\label{gg1}
Y_1^*(x,y)<\oo \quad \Leftrightarrow \quad  Y_1(x,y)<\oo.
\end{equation}

As in the previous proof, any $n$-step SAW in $\Sigma_0$ 
gives rise to $2^n$ SAWs in $\Sigma_1^*$,
and conversely any SAW in $\Sigma^*_1$ gives rise to a SAW in $\Sigma_0$ 
by shrinking each triangle to a vertex. For $n \ge 1$, the contribution of an $n$-step SAW
$\pi\in \Sigma_0$ to $Y_0(x,y)$ is ${x^n}/{n^y}$, and to $Y_1^*(x,y)$ is 
$$
T_n := \sum_{l=0}^n \binom nl \frac{x^{2n+l}}{(2n+l)^y}.
$$
Since
\begin{equation*}
C \frac{[x^2(1+x)]^n}{n^y} 
\leq T_n
\leq D \frac{[x^2(1+x)]^n}{n^y},
\end{equation*}
where $C= 2^{-y} \wedge 3^{-y}$  and $D= 2^{-y}\vee 3^{-y}$,
we have that
\begin{equation*}
C \wt Y_0(x^2(1+x),y)\leq \wt Y_1^*(x,y)\leq D  \wt Y_0(x^2(1+x),y),
\end{equation*}
where $\wt S$ denotes the summation $S$ without the $n=0$ term.
Therefore, 
$$
Y_1^*(x,y)<\oo \quad \Leftrightarrow \quad Y_0(x^2(1+x),y)<\oo.
$$
By \eqref{gg1} and Theorem \ref{thm:main2}(a), 
$\g(G_1) = \g(G_0)$.

Let $\|\pi\|_k$ be the graph-distance between the endpoints of the walk $\pi$ on $G_k$.
Assume $|\gamma|=|\g(G_0)| <\oo$, and write
\begin{equation}\label{def01}
V_k(z)=\sum_{n=1}^{\infty} \frac1{n^{2z+1}} \langle\|\pi\|_k^2 \rangle_{n,k}, 
\end{equation}
where $\langle \cdot \rangle_{n,k}$ denotes the uniform average over the set 
$\Sigma_{n,k}$ of $n$-step SAWs of $G _k$ starting at mid-edges of $X_k$.
Similarly,
\begin{equation}\label{def02}
V_1^*(z)=\sum_{n=1}^{\infty} \frac1{n^{2z+1}}\frac{\sigma_{n,1}^*}{\sigma_{n,1}} 
\langle\|\pi\|_1^2 \rangle_{n,1}^*, 
\end{equation}
where $\s_\cdot^\cdot = |\Sigma_\cdot^\cdot|$, and 
$\langle\cdot \rangle_{n,1}^*$ averages over the subset $\Sigma_{n,1}^*$ of
$\Sigma_{n,1}$ containing $n$-step SAWs of $G_1$ that start in $X_0$
and end in $E_0$.
We  assume there exist constants $C_{i,k} \in (0,\oo)$ and a
slowly varying function $L$ such that
\begin{equation}\label{def2}
C_{1,k}L(n)n^{\gamma-1}\mu_k^n \leq \sigma_{n,k}
\leq C_{2,k}L(n)n^{\gamma-1}\mu_k^n, \qquad k=1,2.
\end{equation}
We shall in fact use slightly less than this.

Similarly to the proof of \eqref{gg1}, 
by \eqref{def02}--\eqref{def2}, there exists $C_1 <\oo$ such that
\begin{equation*}
V^*_1(z)\leq V_1(z)\leq C_1 V^*_1(z),
\end{equation*}
whence
\begin{equation}\label{gg2}
V_1(z)<\oo \quad \Leftrightarrow \quad V_1^*(z)<\oo.
\end{equation}

The contribution of $\pi\in \Sigma_{n,0}$ to $V_0(z)$ is
\begin{equation*}
\frac{1}{\sigma_{n,0}n^{2z+1}}\|\pi\|_0^2.
\end{equation*}
As explained previously, $\pi$ gives rise to
$2^n$ SAWs on $G _1$, making an aggregate contribution of
\begin{equation*}
\sum_{l=0}^n\binom n l 
\frac{1}{\sigma_{2n+l,1}(2n+l)^{2z+1}}(2\|\pi\|_0)^2
\end{equation*}
 to $V^*_1(z)$.
By \eqref{def2}, there exist constants $C_i >0$ such that
\begin{align*}
\frac{C_2}{n^{\gamma-1}L(n)}\sum_{l=0}^{n}\binom nl \left(\frac{1}{\mu_1}\right)^{2n+l}
&\leq\sum_{l=0}^{n}\binom nl \frac{1}{\s_{2n+l,1}}\\
&\leq \frac{C_3}{n^{\gamma-1}L(n)}\sum_{l=0}^{n}
\binom nl \left(\frac{1}{\mu_1}\right)^{2n+l}.
\end{align*} 

By Theorem \ref{thm:main2}(a),
\begin{equation*}
\sum_{l=0}^{n}\binom nl \left(\frac{1}{\mu_1}\right)^{2n+l}=\left(\frac{1}{\mu_{0}}\right)^n,
\end{equation*}
so that
\begin{equation*}
C_4(2^{-2z}\wedge 3^{-2z}) V_0(z)
\leq V_1^*(z)
\leq C_5 (2^{-2z}\vee 3^{-2z}) V_0(z).
\end{equation*}
Therefore, for $|z|<\oo$, 
$$
V_1^*(z)<\oo \quad \Leftrightarrow \quad V_0(z)<\oo.
$$
By \eqref{gg2}, $\nu(G_0) = \nu(G_1)$.

Any $\a\in\sA(G_0)$ acts in a natural way on $G_1=F(G_0)$.
For $k=0,1$ and $\a\in \sA$, 
let
\begin{align*}
Z_{\a, k}(x)=\sum_{\pi\in\Sigma_k(\a)}      x^{|\pi|},\quad
Z^*_{\a,k}(x)=\sum_{\pi\in\Sigma_k^*(\a)} x^{|\pi|},
\qquad x > 0,
\end{align*}
where $\Sigma_k(\a)$ (\resp, $\Sigma_k^*(\a)$) is the set of SAWs
of $G_k$ from mid-edges of $X_k$ (\resp,  $X_{k-1}$) to mid-edges of
$\a X_k$ (\resp,  $\a X_{k-1}$).  Assume $X_0$ and $\a X_0$ are disjoint.
As before,
\begin{equation}
Z_{\a,1}^*(x)=Z_{\a,0}(x^2(1+x))\label{rr1}
\end{equation}
and, as in \eqref{222},
\begin{equation}
Z_{\a,1}^*(x)
\leq Z_{\a, 1}(x)
\leq [1+2x+2x^2+2x^3]^2 Z_{\a,1}^*(x).\label{rr2}
\end{equation}
By \eqref{rr1} and Theorem \ref{thm:main2}(a), for  $w\in\RR$,
\begin{equation*}
\lim_{d_0(W_0,\a W_0)\rightarrow\infty} 
\Bigl[Z_{\a,0}(\mu_0^{-1})d_0(W_0,\a W_0)^{w}\Bigr]
=\infty
\end{equation*}
if and only if
\begin{equation*}
\lim_{d_1(W_1,\a W_1)\rightarrow\infty} 
\Bigl[Z_{\a, 1}(\mu_{1}^{-1})d_1(W_1, \a W_1)^w\Bigr]
=\infty,
\end{equation*}
where $d_k=d_{G_k}$. It follows that  $\eta(G_0) = \eta(G_1)$.
\end{proof}

\section{Proof of Theorem \ref{thm:main3}}\label{sec:proof3}

Let $G=(V,E)$ be a coloured bipartite graph satisfying the given assumptions.
The vertices of any SAW on  $G$ are alternately black and white.
The decorated graph $\wt G = (\wt V, \wt E)$ 
is obtained from $G$ by replacing each black vertex by a triangle,
as illustrated in Figure \ref{fig:fisher}.
The set $\wt V$ is coloured in the natural way: white vertices remain
white, and vertices of Fisher triangles are coloured black.

Let $W$ be a minimal fundamental domain of $G$, and let 
$X$ be the subset of $E$ comprising all edges incident 
to vertices in $W$. Write $\wt W=F(W)$, the set of vertices of the 
triangles formed by the Fisher transformations at black vertices in $W$,
and $\wt X$ for the set of edges of $\wt G$ incident to vertices in 
$\wt W$. It may be seen that
$\wt W$ is a fundamental domain for the coloured graph
$\wt G$. 
Recall that the mid-edges of $E$ may be viewed
as a subset of mid-edges of $\wt E$, and thus $E$ may be 
viewed as a subset of $\wt E$.

Let $s_n$ be the number of $n$-step SAWs of $\wt{G}$
starting at mid-edges in $\wt X$, and 
let $c_n$ be the number of $n$-step SAWs of $\wt{G}$ starting at a mid-edge of  $X$ and ending at a mid-edge of $E$. 
It is immediate that
\begin{equation}\label{rel1}
c_n\leq s_n.
\end{equation}
Any SAW counted in $s_n$ either lies within a single Fisher triangle,
or may be obtained by a 
$k$-step extension (with some $k\le 3$) at one or both endpoints 
of some SAW counted in one of  $c_n$, $c_{n-1}$, $c_{n-2}$, $c_{n-3}$.
Therefore,
\begin{equation}\label{rel2}
s_n\leq c_n + 4c_{n-1}+8c_{n-2}+12c_{n-3} + 18 |W|. 
\end{equation}
By \eqref{connconst},
 the limits $\lim_{n\to\infty}s_n^{1/n}$ and $\lim_{n\to\infty}c_n^{1/n}$ exist and, by \eqref{rel1}--\eqref{rel2}, 
these limits are equal.

A SAW is called \emph{even} if it has even length.
Let $\sE$ be the set of SAWs on $G$ starting at mid-edges of $X$,
and let $\sEe$ be the subset of $\sE$ comprising the even SAWs.
Let $x,y>0$. 
Each step of a SAW on $G$ is assigned weight $x$ at a black
vertex, and weight $y$ at a white vertex. 
Let 
\begin{equation*}
Z(x,y)=\sum_{\pi\in\sE }x^{|\pi_\bb|}y^{|\pi_\ww|},
\end{equation*}
where $|\pi_\bb|$ and $|\pi_\ww|$ are the numbers of 
black and white vertices visited by $\pi$. 
Similarly, let 
\begin{equation}\label{gg3}
\Ze(x,y)=\sum_{\pi \in \sEe}(xy)^{|\pi|/2}.
\end{equation}
It is clear by a decomposition of paths that
\begin{align*}
\Ze(x,y)&\leq Z(x,y),\\
Z(x,y)-\Ze(x,y)&\leq (2x+2y)(1+\Ze(x,y)).
\end{align*}
Hence, 
\begin{equation}\label{301}
\Ze(x,y)<\oo \quad \Leftrightarrow \quad Z(x,y)<\oo.
\end{equation}

We now introduce a third partition function $\wt Z$, namely of the set
$\wt\sE$ of SAWs on $\wt G$ starting at the mid-edges of $X$
and ending at mid-edges of $E$. Each step of such a SAW
traverses two half-edges, and is allocated a weight which depends
on these half-edges.  Let $p,q,r>0$. 
Whenever both half-edges belong to $\wt E\setminus E$,  the weight is $p$; 
if one half-edge is in $E$ and the other in $\wt E\setminus E $, 
the weight is $q$; if both half-edges are in $E$,  the weight is $r$. Then
\begin{equation*}
\wt{Z}(p,q,r) :=
\sum_{\pi\in \wt\sE} p^{|\pi_{p}|} q^{|\pi_q|}r^{|\pi_r|},
\end{equation*}
where $|\pi_p|$ is the number of $p$-steps, etc. By counting edges
of the different types,
\begin{equation*}
\wt{Z}(p,q,r)=Z(q^2(1+p),r).
\end{equation*}
By \eqref{301},
\begin{equation}\label{302}
\wt{Z}(p,q,r)<\oo \quad \Leftrightarrow \quad \Ze(q^2(1+p),r)<\oo.
\end{equation}
By \eqref{gg3},
\begin{equation*}
\Ze(q^2(1+p),r)
\begin{cases} <\oo &\text{if } q^2(1+p)r<\mu^{-2},\\
=\oo &\text{if } q^2(1+p)r> \mu^{-2},
\end{cases}
\end{equation*}
whence the radius of convergence of 
$\wt{Z}(x,x,x)=\sum_{n\ge 0} c_n x^n$ is the
root of the equation
\begin{equation*}
x^3(1+x)=\frac{1}{\mu^2}.
\end{equation*}
Theorem \ref{thm:main3}(a) follows.   Part (b) is proved is
a similar manner to the proof of Theorem \ref{thm:main2'}.

\section*{Acknowledgement} 
This work was supported in part by the Engineering
and Physical Sciences Research Council under grant EP/103372X/1.

\bibliography{fisher2}
\bibliographystyle{amsplain}

\end{document}